\newtheorem*{r1}{Rule 1}
\newtheorem*{r2}{Rule 2}
\newtheorem*{r3}{Rule 3}
\newtheorem{thm}{Theorem}
\begin{document}

\title{Cellular Automata to More Efficiently Compute the Collatz Map}
\author{Sitan Chen\email{sitanchen@college.harvard.edu}
}

\institute{Department of Mathematics, Harvard College, Cambridge, MA 02138}

\def\received{Received 13 January 2013; In final form 13 January 2013}

\maketitle
\begin{abstract}
The Collatz, or $3x+1$, Conjecture claims that for every positive integer $n$, there exists some $k$ such that $T^k(n)=1$, where $T$ is the Collatz map. We present three cellular automata (CA) that transform the global problem of mimicking the Collatz map in bases 2, 3, and 4 into a local one of transforming the digits of iterates. The CAs streamline computation first by bypassing calculation of certain parts of trajectories: the binary CA bypasses division by two altogether. In addition, they allow for multiple trajectories to be calculated simultaneously, representing both a significant improvement upon existing sequential methods of computing the Collatz map and a demonstration of the efficacy of using a massively parallel approach with cellular automata to tackle iterative problems like the Collatz Conjecture. \\\\
\emph{Keyphrases}: Collatz Conjecture, massively parallel, deterministic computational model, cellular automata 
\end{abstract}

\section{INTRODUCTION}

Consider the \emph{Collatz map} $T:\mathds{N}\to\mathds{N}$ defined by $$T(n)=\begin{cases} 3n+1, & n\equiv1\mod{2} \\ n/2, & n\equiv0\mod{2}.\end{cases}$$  Given $n$, define the \emph{total stopping time} to be the smallest $k$ for which $T^k(n)=1$. The \emph{Collatz Conjecture} claims that every $n$ has a finite total stopping time; it has been verified for inputs as high as $20\cdot2^{58}$ by Oliveira e Silva [1], and a generalization of the Conjecture by Kurtz and Simon was shown to be recursively undecidable [2].  If we define the \emph{stopping time} to be the smallest $k$ for which $T^k(n)<n$, the claim that every $n$ has a finite stopping time is equivalent to the Collatz Conjecture. 

The \emph{trajectory} for an input $n$ is the sequence $$n, T(n), T^2(n), T^3(n), ...$$  We can categorize the possible behaviors of the trajectory as follows [3]:

\begin{enumerate}[(i)]
\item\emph{Convergent trajectory}: $T^k(n)=1$ for some positive integer $k$.

\item\emph{Non-trivial cyclic trajectory}: $T^k(n)$ becomes periodic and $T^k(n)\neq1$ for any positive $k$.

\item\emph{Divergent trajectory}: $\displaystyle\lim_{k\to\infty}T^k(n)=\infty$.
\end{enumerate}

The iterative nature of the problem and the complexity of behavior that arises from the simple premises of the Collatz Conjecture suggest the use of cellular automata to mimic computation.

Notable instances of using deterministic computational models to simulate the behavior of the Collatz map include De Mol's 2-tag system using a set of three production rules over an alphabet of three symbols to mimic the Collatz map [4], Michel's two one-tape Turing machines [5] whose halting problems depend on generalizations of the Collatz problem, and Bruschi's two one-dimensional cellular automata that calculate trajectories using \emph{tagging} operations to detect parity [6]. In this paper, we demonstrate one three-dimensional and two two-dimensional cellular automata that also mimic the behavior of the Collatz map but whose evolution laws operate independent of parity, depending solely on local relationships between digits of the iterates. We find that our automata have the potential to compute trajectories more efficiently than do current methods both by bypassing computation of large parts of trajectories and by verifying an arbitrary number of inputs in parallel.

\section{CA1- BASE 3}

This CA acts upon an three-dimensional grid of cubical cells in two layers indexed by 0 and 1, respectively. Each cell in the bottom layer is in one of four states represented in Figure~\ref{fig:basethreerules} by the corresponding colors in parentheses-- \texttt{0} (light gray), \texttt{1} (gray), \texttt{dark gray}, and \texttt{empty} (white)--- and each cell in the top layer is in one of four states also represented in Figure~\ref{fig:basethreerules} by the corresponding colors in parentheses-- \texttt{odd-normal} (gray), \texttt{odd-special} (odd-normal with a cross), \texttt{even} (light gray), and \texttt{unknown-parity} (black).  In the planes of each layer, rows are indexed top to bottom by $\mathds{N}\cup\{0\}$ and columns right to left by $\mathds{Z}$, where row $i$ and column $j$ of the bottom layer lie respectively beneath row $i$ and column $j$ of the top layer. We denote a cell in row $i$ and column $j$ of layer $k$ by the ordered triple $(i,j,k)$ and its state at time period $t$ by $s_{i,j,k}(t)$.

As shown in Figure~\ref{fig:ca3bottom}, given a cell in the bottom row (dark gray), the neighborhood of that cell is the set of cells labeled B-F (light gray), where cell B lies directly under cell F. As shown in Figure~\ref{fig:ca3top}, given a cell in the bottom row (dark gray), the neighborhood of that cell is the set of cells labeled D, G, H (dark gray). In other words, the neighborhood of cell $(i,j,0)$ is the set of cells $\{(i-1,j,0),(i-1,j,1),(i-1,j-1,0),(i-1,j-1,1),(i,j-1,0)\}$, while the neighborhood of cell $(i,j,1)$ is the set of cells $\{(i,j,0),(i,j-1,1)\}$.

\begin{figure}[h]
 \begin{center}
 \subfloat[Bottom layer]{\label{fig:ca3bottom}{\includegraphics[height=3cm]{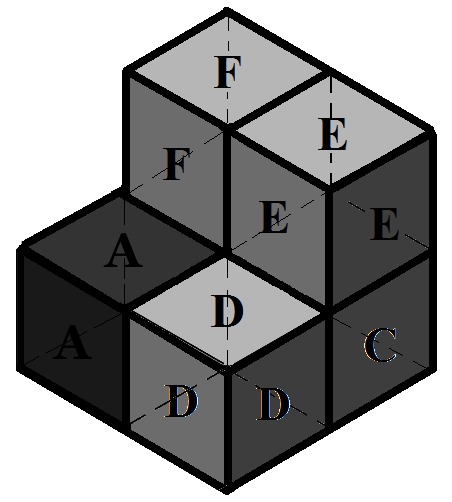}}}
\hspace{0.3cm}
 \subfloat[Top layer]{\label{fig:ca3top}\includegraphics[height=3cm]{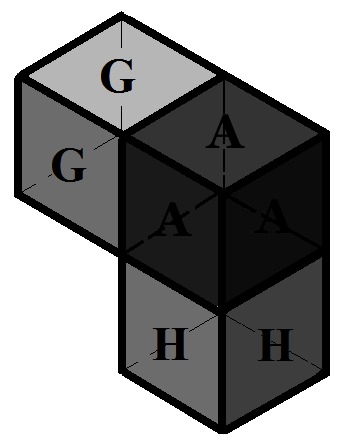}}
 \caption{Neighborhoods for cells in different layers}
 \end{center}
\end{figure}

If input $n$ has ternary representation $\sum^{N}_{i=0}3^id_i$, the CA is initialized as follows: for $i$ from $0$ to $N$ and an arbitrary $k$, we have $s_{0,k+i,0}(0)=d_i$. All cells in the top layer are set to state \texttt{unknown-parity}, and all other cells in the bottom layer are set to state \texttt{empty}.

\begin{figure}[h]
  \centering
      \includegraphics[width=\textwidth]{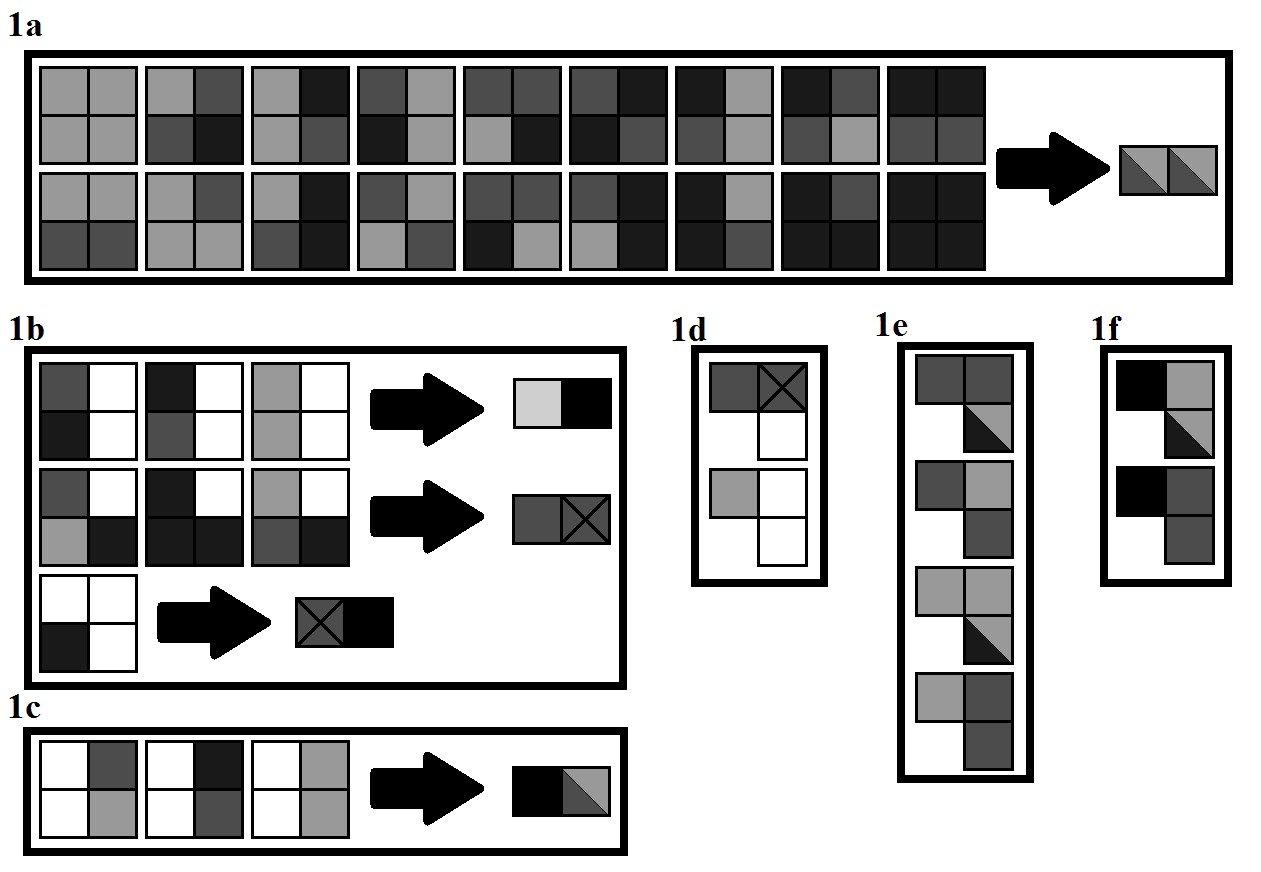}
  \caption{CA1 Evolutions Laws}
  \label{fig:basethreerules}
\end{figure}

For simplicity, the rules in Figure~\ref{fig:basethreerules} are presented in two dimensions: rules of categories 1a through 1c represent transformation of the bottom layer, while rules of categories 1d through 1g represent transformation of the top layer.

A rule in category 1a, 1b, or 1c is represented as a $2\times2$ grid with an arrow directed at a $2\times1$ grid (for convenience, the $2\times2$ grids of rules in Figure~\ref{fig:basethreerules} that involve the same $2\times1$ grid are grouped together). They correspond, respectively, to the bottom and top halves of the configuration in Figure~\ref{fig:ca3bottom}. Specifically, in the $2\times 2$ grid, the bottom-left square corresponds to $A$ in Figure~\ref{fig:ca3bottom}, the cell that is evolving, and its color denotes the future state into which that cell evolves given the states in its neighborhood. If this cell has coordinates $(i,j,0)$, proceeding clockwise, the grid's other cells correspond to the current states of cells B$(i-1,j,0)$, C$(i-1,j-1,0)$, and D$(i,j-1,0)$ respectively. In the $2\times 1$ grid, the left and right cells correspond to cells $F(i-1,j,1)$ and $E(i-1,j-1,1)$, respectively. For example, the first rule in category 1c of Figure~\ref{fig:basethreerules} should be interpreted to mean that $s_{i,j,0}(t+1)=\texttt{empty}$ if $s_{i-1,j,0}(t)=\texttt{empty}$, $s_{i-1,j,1}(t)=\texttt{unknown-parity}$, $s_{i-1,j-1,1}(t)=\texttt{odd-normal} \ \text{or} \ \texttt{even}$, $s_{i-1,j-1,0}(t)=\texttt{1}$, and $s_{i,j-1,0}(t)=\texttt{0}$.

A rule in category 1d, 1e, or 1f is represented as an L-shaped collection of three cells, corresponding to the configuration in Figure~\ref{fig:ca3top}.  Specifically, the upper-right square in each rule corresponds to cell $A$ in Figure~\ref{fig:ca3top}, that is, the cell that is evolving, and the color of the square denotes the future state into which that cell evolves given the state in its neighborhood. If this cell has coordinates $(i,j,1)$, the cells to its left, right, and bottom correspond to current states of cells $G(i+1,j,1)$ and $H(i,j-1,1)$, respectively. For example, the first rule in category 1e of Figure~\ref{fig:basethreerules} should be interpreted to mean that $s_{i,j,1}(t+1)=\texttt{odd-normal}$ if $s_{i,j+1,1}(t)=\texttt{odd-normal}$ and $s_{i,j,0}(t)=\texttt{0} \ or \ \texttt{2}$.

\begin{r1}If a cell's neighborhood corresponds to that of any of the evolution laws (Figure~\ref{fig:basethreerules}), then its state becomes the state of the cell in that evolution law corresponding to cell $A$.\end{r1}

\begin{r2}If the neighborhood of a cell in the top layer does not correspond to any evolution law, then that cell's state becomes \texttt{unknown-parity}.\end{r2}

\begin{r3}If the neighborhood of a cell in the bottom layer does not correspond to any evolution law, then that cell's state becomes \texttt{empty}.\end{r3}

\begin{figure}[h]
  \centering
      \includegraphics[height=1.5in]{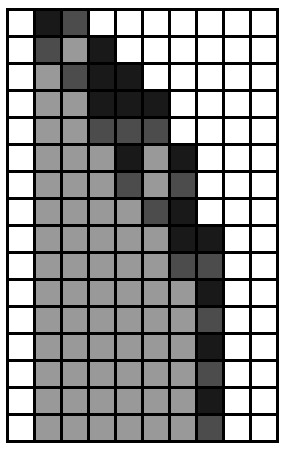}
  \caption{CA1 in action for $T^{0}=7$}
  \label{fig:ca1inaction}
\end{figure}

In Figure~\ref{fig:ca1inaction}, if we treat the non-empty cells in each row as ternary digits, the CA gives a sequence of iterates: $$7, 11, 17, 26, 13, 20, 10, 5, 8, 4, 2, 1, 2, 1, 2, 1.$$ As we will prove, CA1 thus mimics the following modified Collatz map $T_1:\mathds{N}\to\mathds{N}$ defined by \begin{equation*}T_1(n)=
\begin{cases}
(3n+1)/2, & n\equiv1\mod{2} \\
n/2, & n\equiv0\mod{2}.
\end{cases}
\end{equation*}
\begin{thm}
For every $m$, there exists some $i$ so that after $i$ iterations of CA1, the states of cells not in the state of \texttt{empty} in row $k$ of the bottom layer correspond to the ternary digits of $T^k_1(n)$, where $n$ is the input, for all $0\le k\le m$.
\end{thm}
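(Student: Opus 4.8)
The plan is to induct on the row index $k$, proving that each row of the bottom layer eventually reaches the ternary representation of $T_1^k(n)$ and thereafter never changes. The first step is to fix the two representation invariants that the machine is designed to preserve: once row $k$ has settled, its non-\texttt{empty} bottom cells spell out the ternary digits of $m_k:=T_1^k(n)$, and the top-layer cells over row $k$ record the accumulated parity of those digits. The arithmetic fact that makes parity a local quantity is that $3\equiv 1\pmod 2$, so $n=\sum_i d_i 3^i\equiv\sum_i d_i\pmod 2$; a ternary number is even precisely when its digit sum is even. This is why the top layer can compute and propagate parity cell by cell, and why the bottom layer can consult that parity to decide between the branches $n/2$ and $(3n+1)/2$ of $T_1$.

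The base case $k=0$ follows from the initialization, together with a check that the seeded configuration of row $0$ (with the boundary above it) is a fixed point of the evolution laws, so that it persists for all time.

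For the inductive step, I assume row $k$ has stabilized to the digits of $m_k$ by some time $\tau_k$, with correct parity bits above it, and then trace how the bottom-layer rules of categories 1a--1c build row $k+1$. The content of this step is a digit-by-digit verification that the local neighborhood rule reproduces the carry (remainder) propagation of halving a base-$3$ number --- preceded, in the odd case, by the shift-and-increment that realizes $3m_k+1$ --- with the parity bits selecting which branch is applied. Because halving propagates a remainder along the row, each application of the rule advances the computation by a single column; I would therefore identify a computation front that moves one cell per step and show that after a number of steps bounded by the digit-length of $m_{k+1}$ the entire row $k+1$ has formed and become a fixed point relative to the now-constant row $k$ above it. This yields a recursion $\tau_{k+1}\le\tau_k+O(\text{length of }m_{k+1})$; iterating to $k=m$ and taking $i=\tau_m$ gives a single time after which rows $0$ through $m$ are simultaneously correct, which is exactly the claim.

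I expect the main obstacle to be the bookkeeping at the seam between the two layers and the two branches of $T_1$: one must confirm that the parity produced in the top layer is present, and correct, at precisely the bottom cells of the preceding row where rules 1a--1c read it. A closely related difficulty is the appearance of new leading digits when $T_1$ lengthens the representation (for instance $7=21_3\mapsto 11=102_3$); the argument must show that no non-\texttt{empty} state leaks leftward past the true most significant digit, and here Rules 2 and 3 --- which reset any unmatched neighborhood to \texttt{unknown-parity} or \texttt{empty} --- are what confine the computation. Finally, establishing that the intended configuration really is a fixed point once the row above is fixed (so that ``stabilized'' means permanent) is the delicate verification on which the whole induction rests, and is where I would concentrate the detailed case analysis.
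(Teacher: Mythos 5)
Your plan is correct and follows essentially the same route as the paper's proof: a row-by-row argument whose substance is the digit-by-digit verification that rules 1a--1c implement doubling-with-carry (hence halving, with the \texttt{odd-special} marker appending the $+1$ in the odd case) while rules 1d--1g compute parity as partial digit sums in the top layer, exploiting that base-$3$ parity equals digit-sum parity. The only difference is presentational: you make the induction on $k$ and the stabilization times $\tau_k$ explicit, whereas the paper leaves that temporal bookkeeping implicit and concentrates, as you anticipate, on the exhaustive case analysis of the rule categories (rightmost, leftmost, and inner digits).
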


\begin{proof}
In base three, parity of a number is the same as the parity of the sum of that number's digits. So whereas the bottom layer of the CA represents transformations of the iterates, the upper layer serves to add digits $\pmod 2$ to determine parity. As we shall show, the latter goes about doing this by computing successive partial sums, starting with the most significant digits.

In other words, given an iterate $\sum^{N-1}_{k=0}3^kd_k$, after $i$ generations have elapsed for the CA, the cell above the cell representing $d_{N-i+1}$ will have state equal to the parity of $\sum^{N}_{k=N-i+1}d_k$, so after $N+1$ generations, the state of the cell above $d_{0}$ will be the parity of the iterate.

We claim that the rules in categories 1d, 1e, and 1f are exhaustive and properly determine the parity of an iterate in this way. We will denote cell $(i,j)$ to be the cell that is evolving.

If $s_{i,j+1,1}(t)=\texttt{unknown-parity}$, the CA is determining the parity of the leading digit $(i,j,0)$, so $s_{i,j,1}(t+1)$ is simply the parity of $s_{i,j,0}(t)$, which can take on three possible values--\texttt{0}, \texttt{1}, \texttt{2}--giving the three evolution laws in category 1f.

Otherwise, if $s_{i,j,0}(t)\neq\texttt{empty}$, the CA is calculating the parities of partial sums of the $k$ significant digits, and there are three possible states for $(i,j,0)$--\texttt{0}, \texttt{1}, \texttt{2}-- and two for $(i,j+1,1)$--\texttt{odd-normal}, \texttt{even}--making for the six evolution laws in categories 1e. In each case, $s_{i,j,1}(t+1)=s_{i,j-1,1}(t)+s_{i,j,0}(t)$ in the sense of adding parities so that the sum of state \texttt{even} and \texttt{1} is \texttt{odd-normal} and the sum of \texttt{odd} and \texttt{1} is \texttt{even}, for example.

If $s_{i,j,0}(t)=\texttt{empty}$, the CA has reached the right end of the iterate. If the iterate is odd and the coordinates of the units digit are $(x,y,0)$, the CA sets the state of $(x,y-1,1)$ to \texttt{odd-special}
to signal the CA to append a 1 to the iterate (see below), giving the first rule in category 1d. The last two rules in this category tell the CA to leave all top-layer cells in row $i$ to the right of $(i,j-1,1)$ (for odd iterates, to the right of $(i,j,1)$) in state \texttt{unknown-parity}.

Therefore, the rules in categories 1d to 1g properly determine the parity of iterates.

It suffices to show that the rules in categories 1a to 1c are exhaustive and properly mimic the Collatz map in the bottom layer. In particular, we will show that if $T^k(n)=\sum^{N}_{i=0}3^id_i$ is even and cell $(x,y+i,0)$ has state $d_i$ for $0\le i\le N$, then if $T^{k+1}(n)=\sum^{N'}_{i=0}3^id'_i$, cell $(x+1,y+i,0)$ will eventually have state $d'_i$ for $0\le i\le N'$. If $T^k(n)$ is odd and if $T^{k+1}(n)=\sum^{N'}_{i=0}3^id'_i$, cell $(x+1,y+i-1,0)$ has state $d'_i$ for $0\le i\le N'$.

In our proof, we will denote the cell in the bottom-left of each evolution law in Figure~\ref{fig:basethreerules}, that is, the cell that is evolving, by $(i,j,0)$. We will continue to denote the current iterate by $\sum^{N'}_{k=0}3^kd'_k$ and the iterate in the preceding row by $\sum^{N}_{k=0}3^kd_k$.

If $s_{i-1,j-1,0}(t)=\texttt{empty}$, the CA is determining the rightmost digits of the iterate on row $i$. The first and last rows of category 1b in Figure~\ref{fig:basethreerules} represent the two possible cases for $(s_{i-1,j,1}(t),s_{i-1,j-1,1}(t))$ for when the previous iterate is even or odd, respectively. In the former case, the CA should mimic division by two so that $2d'_0=2s_{i,j,0}(t+1)\equiv s_{i-1,j,0}(t)=d_0\pmod{3}$. In the latter case, the CA should mimic $T(n)=(3n+1)/2$. When $s_{i-1,j,1}(t)=\texttt{odd-special}$ as in the last rule of category 1b, the CA effectively treats the empty cell $(i-1,j,0)$ as being in state \texttt{1} so that $2d'_0=2s_{i,j,0}(t+1)\equiv 1\pmod{3}$, meaning the state of $(i,j,0)$, the units digit of the new iterate will be \texttt{2}. The digit $d'_1$ must then satisfy $2(3d'_1+d'_0)\equiv 3d_0+1\pmod{9}$, giving the second row of rules in category 1b.

If $s_{i-1,j,0}(t)=\texttt{empty}$, the CA is determining the leftmost digits of the iterate on row $i$. By the above discussion of the process by which the CA determines parity, cell $(i-1,j,1)$ must be in state \texttt{unknown-parity}. Furthermore, $s_{i,j-1,0}(t)$ cannot exceed $s_{i-1,j-1,0}(t)$. Otherwise, if we denote $x$ to be the iterate in row $i-1$ and $y$ the iterate in row $i$, then if $\sum^{N}_{k=0}3^kd_k$ is even, $x/2<y$, and if it is odd, $(3x+1)/2<y$, both contradictions! In fact, for there to be no such contradictions when $s_{i-1,j-1,0}(t)$ is \texttt{1} \ or \ \texttt{2}, $s_{i,j-1,0}(t)$ must be strictly less than $s_{i-1,j-1,0}(t)$. This gives exactly one possibility for each choice of $s_{i-1,j-1,0}(t)$ from $\{\texttt{0},\texttt{1},\texttt{2}\}$, and in each case, it is clear that because $(i,j-1,0)$ represents the leftmost digit, $s_{i,j,0}(t+1)=\texttt{empty}$, giving the rules in category 1c.

If none of the cells in the neighborhood of $(i,j)$ are  \texttt{empty}, the CA is determining the inner digits of the iterate, so $s_{i-1,j,1}(t)$ and $s_{i-1,j-1,1}(t)$ must be \texttt{even} or \texttt{odd-normal} as in the rules in category 1a. First note that multiplication by two in base three involves carries of at most one. Therefore, for every pair $(s_{i-1,j,0}(t),s_{i-1,j-1,0}(t))\in\{\texttt{0,1,2}\}\times\{\texttt{0,1,2}\}$, $2s_{i,j-1,0}(t)\equiv s_{i-1,j-1,0}(t)+\epsilon\pmod{3}$, where $\epsilon\in\{0,1\}$, giving the 18 possible neighborhoods in category 1a. In each of these cases, $s_{i,j,0}(t+1)$ satisfies $$2\left(3s_{i,j,0}(t+1)+s_{i,j-1,0}(t)\right)+\epsilon\equiv3s_{i-1,j,0}(t)+s_{i-1,j-1,0}(t)\pmod{9}$$ giving the evolution laws in category 1a.\end{proof}

\section{CA2 -- BASE 4}

This cellular automaton acts on a two-dimensional infinite grid of square cells each in one of five possible states ---represented in Figure~\ref{fig:basefourlaws} by the corresponding colors in parentheses---\texttt{0} (white), \texttt{1} (light gray), \texttt{2} (dark gray), \texttt{3} (black), and \texttt{empty} (white). Additionally, states \texttt{0} through \texttt{3} each have an additional attribute of either \texttt{-odd} or \texttt{-even}, indicated by a circle or a square within the cell, respectively (e.g. a state of \texttt{0-odd} is a white cell containing a circle).

Let the rows be indexed top to bottom by $\mathds{N}\cup\{0\}$ and the columns right to left by $\mathds{Z}$, and denote the cell in the $i$th row and $j$th column by $(i,j)$ and its state in time period $t$ by $s_{i,j}(t)$.

As shown in Figure~\ref{fig:basefourneighbor}, the neighborhood of cell $(i,j)$ (white) is the set of cells $\{(i,j-1),(i-1,j),(i-1,j-1)\}$ (dark gray).

\begin{figure}[h]
  \centering
      \includegraphics[height=0.5in]{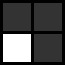}
  \caption{Neighborhood of CA2}
  \label{fig:basefourneighbor}
\end{figure}

If input $n$ has quaternary representation $\sum^{N}_{i=0}4^id_i$ and the smallest $i$ for which $d_i\neq 0$ is $L$, then the CA is initialized as follows: for an arbitrary $k$ and $i$ from $L$ to $N$, we have $s_{0,k+i}(0)=d_i$ and $s_{x,y}(0)=\texttt{empty}$ for all $(x,y)\not\in\{(0,k),...,(0,k+N)\}$.

\begin{figure}[h]
  \centering
      \includegraphics[width=\textwidth]{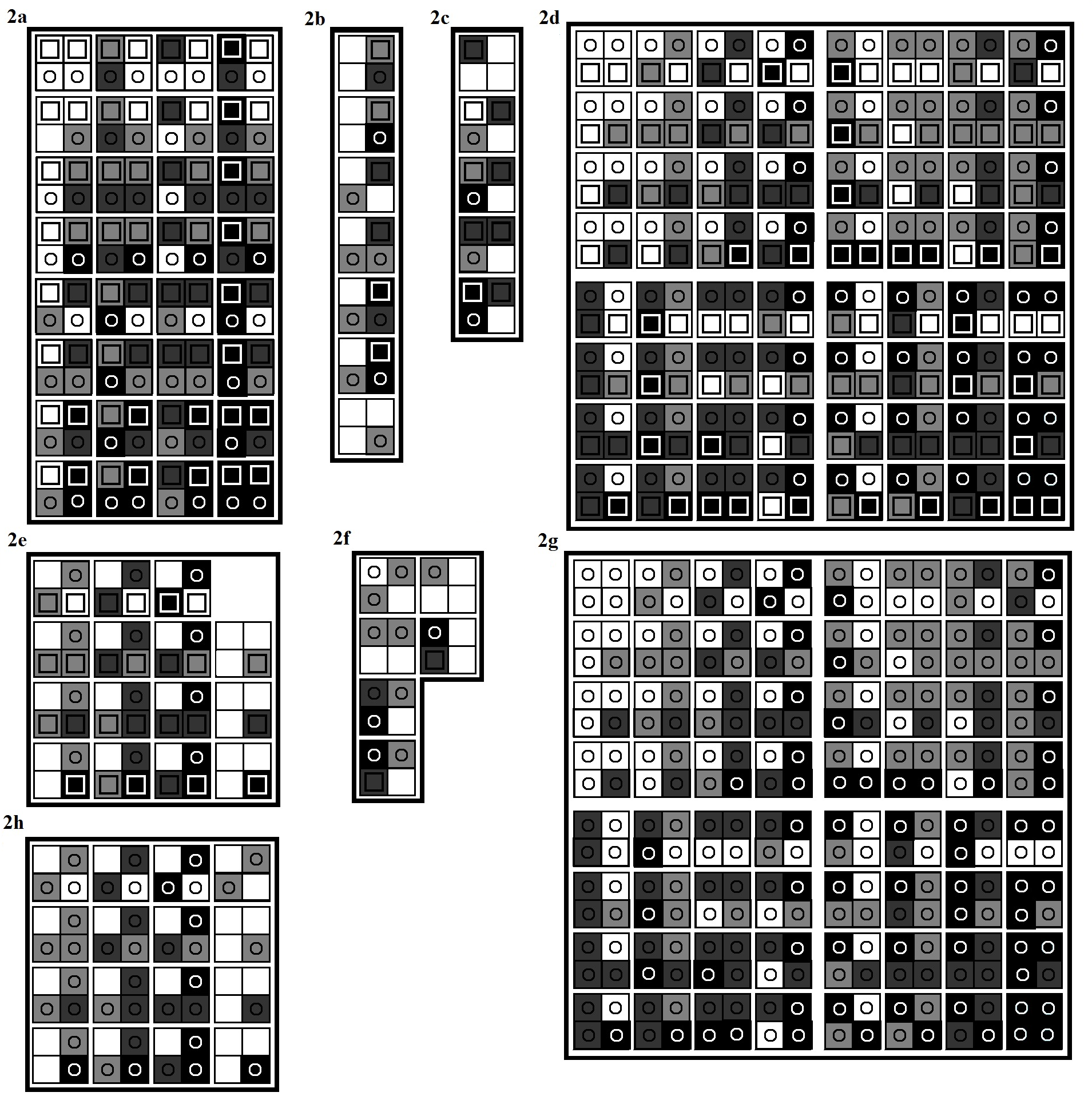}
  \caption{CA2 Evolution laws}
  \label{fig:basefourlaws}
\end{figure}

In Figure~\ref{fig:basefourlaws}, each $2\times2$ grid represents one evolution law, with each cell playing the same role as it does in Figure~\ref{fig:basefourneighbor}. In other words, the bottom-left cell in each $2\times2$ grid represents the cell that is transforming, and its color represents the future state into which it evolves given the states of its neighborhood. The colors of the remaining three cells represent the current states of the neighborhood that determine the state into which the bottom-left cell evolves. For example, the first of eight evolution laws in category 2b should be interpreted to mean that $s_{i,j}(t+1)=\texttt{empty}$ if $s_{i-1,j}(t)=\texttt{empty}$, $s_{i-1,j-1}(t)=\texttt{1-odd}$ and $s_{i,j-1}(t)=\texttt{3-even}$.

\begin{r1}If a cell's neighborhood corresponds to one of the evolution laws (Figure~\ref{fig:basefourlaws}), then that cell's state becomes the state of the cell as described in the evolution law.\end{r1}

\begin{r2}If a cell's neighborhood does not correspond to any evolution law, then that cell's state becomes \texttt{empty}.\end{r2}

\begin{figure}[h]
	\centering
	\includegraphics[height=1.2in]{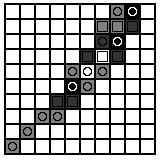}
	\caption{CA2 in action for $T^{0}=7$}
	\label{fig:ca2inaction}
\end{figure}

In Figure~\ref{fig:ca2inaction}, if we treat the non-empty cells in each row as quaternary digits, CA2 gives a sequence of iterates: $$7, 22, 11, 34, 17, 13, 5, 1, 1.$$ As we will prove, because CA2 mimics the following modified Collatz map $T_2:\mathds{N}\to\mathds{N}$ defined by \begin{equation*}T_2(n)=\begin{cases} (3n+1)/4^{k(3n+1)}, & n\equiv1\mod{2} \\ n/2, & n\equiv0\mod{2},\end{cases}\end{equation*} where $4^{k(x)}$ is the largest power of four dividing $x$, by eliminating trailing zeroes.

\begin{thm}
For every $m$, there exists some $i$ so that after $i$ iterations of CA2, the states of cells not in the state of \texttt{empty} in row $k$ correspond to the quaternary digits of $T^k_2(n)$, where $n$ is the input, for all $0\le k\le m$.
\end{thm}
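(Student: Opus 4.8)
The plan is to mirror the structure of the proof of Theorem~1, exploiting the fact that base four makes parity detection purely local. Since $4\equiv0\pmod2$, a quaternary number $\sum_i 4^i d_i$ is congruent to its units digit $d_0$ modulo $2$; hence the parity of an iterate is read off from a single digit, and no auxiliary parity layer (as in CA1) is needed---this is why CA2 is only two-dimensional. The role of the \texttt{-odd}/\texttt{-even} attribute attached to each digit is to record that digit's own low bit, which is precisely the piece of information a cell must borrow from a neighbor to reproduce the new iterate's digits locally. As in the proof of Theorem~1, I would first reduce the theorem to a single-step statement by induction on the row index: it suffices to show that if, for some $k$, the non-\texttt{empty} cells of row $k$ eventually stabilize to the quaternary digits of $T_2^k(n)$ (with correct attributes), then the non-\texttt{empty} cells of row $k+1$ eventually stabilize to those of $T_2^{k+1}(n)$. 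The ``there exists some $i$'' quantifier then accounts for the fact that, because a cell $(i,j)$ depends on its in-row right neighbor $(i,j-1)$, each row is filled by a wave requiring a number of time steps that grows with the length of the iterate.

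For the single-step claim I would split on the parity of the current iterate, read from its units digit. In the even case, where $T_2$ acts by $n\mapsto n/2$, I would realize the new digits through a bit shift: writing each quaternary digit as two bits, every new digit consists of the high bit of one old digit together with the low bit (that is, the parity) of the next more significant old digit. After a one-column leftward realignment of the new row, the neighborhood $\{(i,j-1),(i-1,j),(i-1,j-1)\}$ places both of these within reach---the more significant old digit supplying its parity directly through its \texttt{-odd}/\texttt{-even} attribute---so the rules governing interior even digits are forced, and I would verify each against the relation $2e+c_{\mathrm{in}}=d+4c_{\mathrm{out}}$. In the odd case, where $T_2$ computes $(3n+1)/4^{k(3n+1)}$, I would treat tripling as ordinary base-four multiplication: $3d_i+c_{i-1}=e_i+4c_i$ with a carry $c_i\in\{0,1,2\}$ determined uniquely right-to-left, matching the direction in which the neighborhood supplies carries; the $+1$ is absorbed at the units digit, and the removal of trailing zeros corresponds to the boundary behavior at the right end of the row.

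The boundary analysis is where I would spend most of the effort, and it falls into two parts. At the left end, where the cell above is \texttt{empty}, I would argue, as in Theorem~1, that the carry emerging from the most significant old digit is bounded (at most $2$ for tripling, a single borrow for halving), so it contributes at most one further leading digit before vanishing; the cells beyond then become \texttt{empty}, forcing the rules of the corresponding category and ensuring no spurious high-order digits survive. At the right end, I would show that the rules with an \texttt{empty} above-neighbor and the appropriate parity attributes (for instance the displayed rule sending a cell to \texttt{empty} when its neighbors read \texttt{empty}, \texttt{1-odd}, \texttt{3-even}) correctly delete units digits equal to \texttt{0}, thereby implementing division by the largest power of four together with the accompanying realignment. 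Finally I would invoke Rule~2 to conclude that every neighborhood not matching a listed law produces \texttt{empty}, which both establishes exhaustiveness and prevents extraneous non-\texttt{empty} cells from surviving.

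The main obstacle I anticipate is the even case: unlike base three, base-four division by two---equivalently, solving $2\cdot(\text{new})=(\text{old})$ for the new digits---is not forced in the right-to-left direction, since for a fixed old digit and incoming carry both carry-out values can yield a legal new digit. Resolving this ambiguity is exactly what the \texttt{-odd}/\texttt{-even} attribute and the one-column realignment are for: they transmit the single bit that the local neighborhood would otherwise be missing. The delicate point of the proof is then to verify that these conventions remain mutually consistent across the interior, left-boundary, and right-boundary (trailing-zero) rules simultaneously, so that the digits, their parity attributes, and the horizontal alignment all cohere from one row to the next.
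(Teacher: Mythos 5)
Your global architecture matches the paper's (row-by-row induction, parity case split, interior/left-boundary/right-boundary analysis, exhaustiveness via Rule 2), but there is a genuine gap: you have misread what the \texttt{-odd}/\texttt{-even} attribute encodes, and this leaves the central locality problem unsolved. In CA2 the attribute records the parity of the \emph{iterate}, not of the individual digit---every digit of an even iterate carries \texttt{-even}, and the rule set contains states such as \texttt{3-even} and \texttt{2-odd}, which are impossible under your per-digit-low-bit reading. Its actual purpose is mode selection: an interior cell cannot see the units digit, so the only way it can know whether to apply the halving rules (categories 2a--2c) or the $3n+1$ rules (2d--2h) is for the iterate's parity to be replicated locally on every digit. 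Your proposal reads parity ``from the units digit'' only at the meta level of your case split and assigns the attribute a different job, so under your semantics interior cells have no way to choose between the two transformations. Moreover, your induction would also need to establish that the attributes of row $k+1$ are correctly maintained---the new iterate's parity is determined at the right boundary and must propagate leftward along the new row through the in-row neighbor (the paper requires $s_{i,j}(t+1)$ to share the parity attribute of $s_{i,j-1}(t)$)---a step entirely absent from your plan, even though it is what makes the single-step claim inductively usable.

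Relatedly, the ``main obstacle'' you identify in the even case is a phantom: once each new digit is written as $2\cdot\mathrm{lowbit}(d_j)+\mathrm{highbit}(d_{j-1})$---equivalently the paper's formulation $n/2=2n/4$, i.e.\ doubling with carries in $\{0,1\}$ followed by deletion of the trailing zero---both needed bits come from full digits already visible in the neighborhood, so no extra transmitted bit is missing and the attribute plays no role in resolving carries. The genuinely delicate point, which your tripling formulation skates over, lies on the odd side: carries (in the paper, borrows, via $3n+1=(4n+1)-n$ with borrows in $\{0,1\}$) are not stored in any cell and must be reconstructed from the already-computed new digit $s_{i,j-1}$ together with $s_{i-1,j-1}$; the paper's test is whether $s_{i,j-1}(t)+s_{i-1,j-1}(t)\ge 4$, and ruling out the ambiguous value $s_{i,j-1}(t)+s_{i-1,j-1}(t)=3$ with $\epsilon=1$ requires a separate argument (it would force an unending string of zero digits, contradicting the nonzero units digit guaranteed by trailing-zero deletion). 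Your carries $c_i\in\{0,1,2\}$ can in fact be recovered from the same pair of cells, since the three possible values of $3d_{j-1}+c_{j-2}$ are distinct modulo $4$, but you would need to supply that reconstruction argument explicitly; as written, the proposal assumes the carry is available ``right-to-left'' without explaining where a cell reads it from.
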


\begin{proof}
Although parity in base four is given by the parity of the units digit, each digit must be represented by two states depending on the parity of the iterate for the CA to know whether to mimic $T(n)=n/2$ or $T(n)=3n+1$.

The former function is equivalent to $T(n)=2n/4$ and thus to multiplying by two and removing the resulting trailing zero; the latter function is equivalent to $T(n)=(4n+1)-n$ and thus, roughly speaking, to subtracting the iterate from a copy of itself shifted left one place and right-appended with a digit of 1.

We will denote the cell in the bottom-left of each evolution law in Figure~\ref{fig:basefourlaws}, that is, the cell that is evolving, by $(i,j)$, the current iterate by $\sum^{N'}_{k=0}4^kd'_k$, and the iterate in the preceding row by $\sum^{N}_{k=0}4^kd_k$.

We first prove that the rules in categories 2a to 2c are exhaustive in mimicking $T(n)=n/2$. Note again that multiplication by two involves carries of at most one.

If $s_{i,j-1}(t)=\texttt{empty}$ as in the rules in category 2c, the CA is determining the rightmost digits of the iterate on row $i$. By design, $d_0$ is necessarily nonzero, so for the iterate in row $i-1$ to be even, $d_0=2$, giving the first rule in category 2c. Here, $s_{i,j}(t+1)\equiv 2s_{i-1,j}(t)\pmod{4}$. There are then four choices-- \texttt{0-}, \texttt{1-}, \texttt{2-}, \texttt{3-even}-- for $d_1$, represented by $s_{i-1,j}(t)$ in the last four rules of category 2c, and because there is a carry of one from multiplying $d_0$ by 2, $d'_0=s_{i,j}(t+1)\equiv2s_{i-1,j}+1\pmod{4}$. 

If $s_{i-1,j}(t)=\texttt{empty}$ as in the rules in category 2b, the CA is determining the leftmost digits of the iterate on row $i$. If $s_{i-1,j-1}(t)\neq\texttt{empty}$, there are three possible states for cell $(i-1,j-1)$, namely \texttt{1-}, \texttt{2-}, \texttt{3-even}, and in each of these cases, there are $s_{i,j-1}=2s_{i-1,j-1}+\epsilon$, where $\epsilon\in\{0,1\}$, giving the first six evolution laws in category 2b. In these rules, the state into which cell $(i,j)$ evolves is chosen so that $4s_{i,j}+s_{i,j-1}=2s_{i-1,j-1}(t)+\epsilon$. If $d_{N'}=s_{i,j}=1$, then the cell to the left of this should be empty to mark the end of the iterate, giving the last rule in category 2b.

If none of the cells in the neighborhood of $(i,j)$ are in state \texttt{empty} as in the rules in category 2a, the CA is determining the inner digits of the iterate. Cells $(i-1,j)$ and $(i-1,j-1)$ can be in any of four states, namely \texttt{0-}, \texttt{1-}, \texttt{2-}, or \texttt{3-even}, and for each of these 16 cases, $s_{i,j-1}=2s_{i-1,j-1}+\epsilon$ where $\epsilon\in\{0,1\}$, giving the 32 evolution laws in category 2a. In these rules, the state into which cell $(i,j)$ evolves is chosen so that $$4s_{i,j}+s_{i,j-1}\equiv2\left(4s_{i-1,j}+s_{i-1,j-1}\right)+1\pmod{16}.$$

We next prove that the rules in 2d to 2h are exhaustive in mimicking $T(n)=3n+1$.

If $s_{i,j-1}(t)=\texttt{empty}$ as in the rules in category 2f, the CA is determining the rightmost digits of the iterate on row $i$. Because the iterate on row $i-1$ must be odd, there are two choices for $d_0$-- \texttt{1-odd} or \texttt{3-odd}-- represented by cell $(i,j-1)$ in the two rules of category 2f's right column (Figure~\ref{fig:basefourlaws}). If $d_0=\texttt{3-odd}$, $3d_0+1\equiv 2\pmod{4}$ so that $d'_0=s_{i,j}(t+1)=\texttt{2-even}$, but if $d_0=\texttt{1-odd}$, $3d_0+1\equiv 0\pmod{4}$ so that $s_{i,j}(t+1)=\texttt{empty}$ because CA2 deletes trailing zeroes. 

In the former case in which $d'_0$ has been determined, the CA then turns to evolution laws in category 2d to determine the inner digits of the iterate on row $i$. In the latter case in which $d'_0$ has not been determined, there are four possible values for $d_1$, namely \texttt{0-}, \texttt{1-}, \texttt{2-}, \texttt{3-odd}, giving the four evolution laws of category 2f's left column. By our interpretation of $3x+1$ as $(4x+1)-x$, the state into which cell $(i,j)$ evolves is chosen so that $d'_0=s_{i,j}(t+1)\equiv s_{i-1,j-1}(t)-s_{i,j}(t)\pmod{4}$ if $s_{i,j}(t+1)\neq\texttt{0-odd}$; otherwise, $s_{i,j}(t+1)=\texttt{empty}$ because CA2 deletes trailing zeroes. Proceeding as above, the CA turns to evolution laws in category 2d to determine the iterate's inner digits if $d'_0$ has been determined; otherwise, the above process is repeated until $d'_0$ is found.

If $s_{i-1,j}(t)=\texttt{empty}$ as in the rules in categories 2e and 2h, the CA is determining the leftmost digits of the iterate on row $i$. Observe that beyond the parity of this iterate, the rules in these categories are the same. 

In this case, if $s_{i-1,j-1}(t)\neq\texttt{empty}$, $(i-1,j-1)$ can be in any of three different states, namely \texttt{1-}, \texttt{2-}, \texttt{3-odd}, and $(i,j-1)$ can be any base-four digit in either parity. Certainly the state into which $(i,j)$ evolves, if not \texttt{empty}, must have the same parity as $(i,j-1)$ (e.g. if $s_{i,j-1}(t)=\texttt{3-even}$, $s_{i,j}(t+1)=\texttt{n-even}$ for some digit $n$). In particular, $s_{i,j}(t+1)\equiv s_{i-1,j-1}(t)-s_{i-1,j}(t)-\epsilon$, where $\epsilon$ equals 1 if $s_{i,j-1}+s_{i-1,j-1}\ge 4$ and 0 otherwise (see discussion of category 2d and 2g rules below).

On the other hand, if $s_{i-1,j-1}(t)=\texttt{empty}$, $(i-1,j)$ must be in state $\texttt{empty}$ as well, and $s_{i,j-1}(t)$ can be any nonzero digit in either parity. Here $s_{i,j-1}(t)$ must be $d'_{N'}$: otherwise, if we denote $T^k_2(n)$ to be the iterate in row $i-1$ and $y$ to be that in row $i$, we have that $y>T^{k+1}(n)$. Therefore, $s_{i,j}(t+1)=\texttt{empty}$.

If none of the cells in the neighborhood of $(i,j)$ are in state \texttt{empty} as in the rules in category 2d and 2g, the CA is determining the inner digits of the iterate on row $i$. Observe again that beyond the parity of this iterate, the rules in these categories are the same. Each cell in the neighborhood can be any base-four digit, and the state of $(i,j-1)$ can have either the \texttt{odd} or \texttt{even} attribute, giving 64 evolution laws for both category 2d and 2g. Because $3x+1=(4x+1)-x$, $s_{i,j-1}(t)\equiv s_{i-1,j-2}(t)-s_{i-1,j-1}(t)-\epsilon\pmod{4}$, where $\epsilon\in\{0,1\}$.

Certainly if $s_{i,j-1}(t)+s_{i-1,j-1}(t)\ge4$, the subtraction $s_{i-1,j-2}(t)-s_{i-1,j-1}-\epsilon$ was performed with a borrowing from $s_{i-1,j}(t)$, so in this case, $s_{i,j}(t+1)\equiv s_{i-1,j-1}(t)-s_{i-1,j}(t)-1\pmod{4}$. 

If $\epsilon = 1$ and $s_{i,j-1}(t)+s_{i-1,j-1}(t)=3$, then $s_{i-1,j-2}(t)=\texttt{0}$. Call this digit $d_k$ of the iterate on row $i-1$. Because $\epsilon = 1$, $d_{k-1}=\texttt{0}$ and there had to be a borrowing so that $d_{k-2}=\texttt{0}$. Continuing thus, we arrive at a contradiction, as the units digit of the iterate on row $i-1$ is nonzero.

So if $s_{i,j-1}(t)+s_{i-1,j-1}(t)<4$, the subtraction $s_{i-1,j-2}(t)-s_{i-1,j-1}-\epsilon$ was performed with no borrowing from $s_{i-1,j}(t)$, meaning $s_{i,j}(t+1)\equiv s_{i-1,j-1}(t)-s_{i-1,j}(t)\pmod{4}$.\end{proof}

\section{CA 3 -- BASE 2}

This cellular automaton, most efficient and most easily implementable CA, acts on an infinite two-dimensional grid of square cells, each of which can take on one of three states---represented in Figure~\ref{fig:basetwolaws} by the corresponding colors in parentheses--- \texttt{0} (dark gray), \texttt{1} (light gray), or \texttt{empty} (white).  The initial state of the CA is defined to be such that all cells are in the state of ``empty.''

Again, let the rows be indexed top to bottom by $\mathds{N}\cup\{0\}$ and the columns right to left by $\mathds{Z}$, and denote the cell in the $i$th row and $j$th column by $(i,j)$ and its state in time period $t$ by $s_{i,j}(t)$.

As shown in Figure~\ref{fig:basetwoneighbor}, the neighborhood of cell $(i,j)$ (white) is the set of cells $\{(i,j-1),(i-1,j),(i-1,j-1),(i-1,j-2)\}$ (dark gray).

\begin{figure}[h]
	\centering
			\includegraphics[height=0.5in]{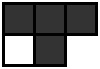}
	\caption{Neighborhood of CA3}
	\label{fig:basetwoneighbor}
\end{figure}

If input $n$ has binary representation $\sum^{N}_{i=0}2^id_i$ and the smallest $i$ for which $d_i\neq 0$ is $L$, then the CA is initialized as follows: for an arbitrary $k$ and $i$ from $L$ to $N$, we have $s_{0,k+i}(0)=d_i$ and $s_{x,y}(0)=\texttt{empty}$ for all $(x,y)\not\in\{(0,k),...,(0,k+N)\}$.

\begin{figure}[h]
  \centering
      \includegraphics[width=\textwidth]{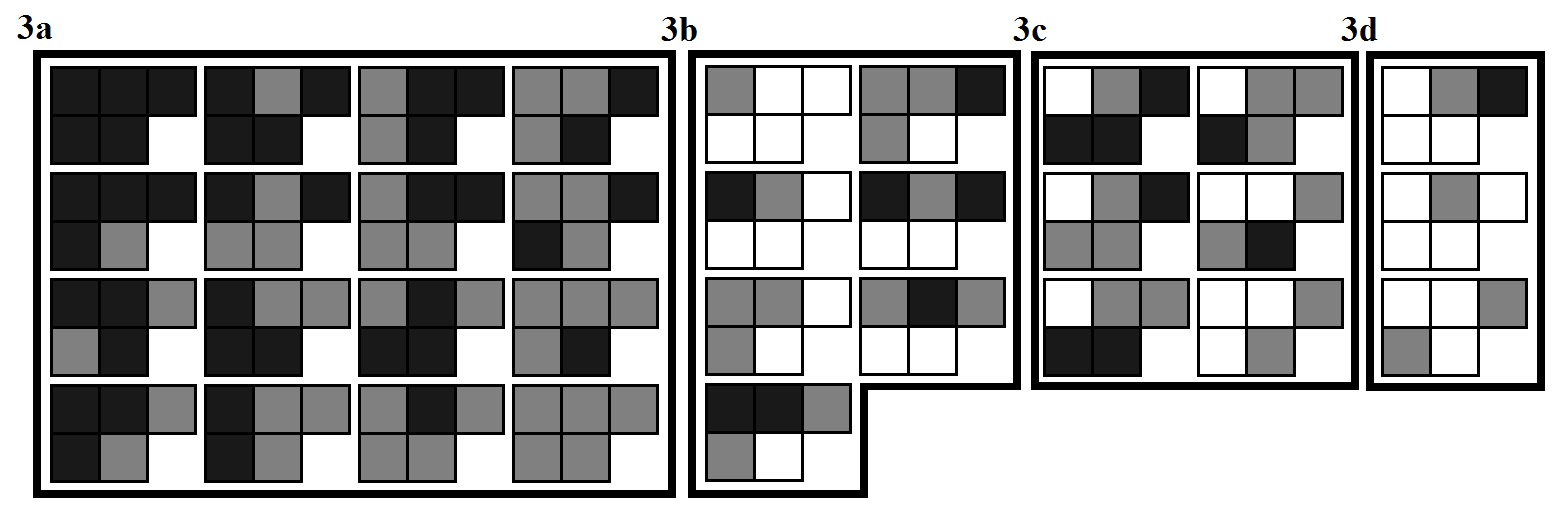}
  \caption{CA3 Evolution laws}
  \label{fig:basetwolaws}
\end{figure}

In Figure~\ref{fig:basetwolaws}, each group of five cells represents one evolution law, with each cell playing the same role as it does in Figure~\ref{fig:basetwoneighbor}. In other words, the bottom-left cell in each rule represents the cell that is transforming, and its color represents the future state into which it evolves given the states of its neighborhood. The colors of the remaining four cells represent the current states of the neighborhood that determine the state into which the bottom-left cell evolves. For example, the first of six evolution laws in category 3c should be interpreted to mean that $s_{i,j}(t+1)=\texttt{0}$ if $s_{i-1,j}(t)=\texttt{empty}$, $s_{i-1,j-1}(t)=\texttt{1}$ and $s_{i,j-1}(t)=\texttt{0}$, and $s_{i-1,j-2}(t)=\texttt{0}$.

\begin{r1}If a cell's neighborhood corresponds to one of the evolution laws (Figure~\ref{fig:basetwolaws}), then that cell's state becomes the state of the cell as described in the evolution law.\end{r1}

\begin{r2}If a cell's neighborhood does not correspond to any evolution law, then that cell's state becomes \texttt{empty}.\end{r2}

\begin{figure}[h]
	\centering
	\includegraphics[height=0.8in]{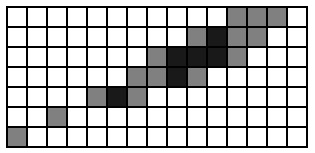}
	\caption{CA3 in action for $T^{0}=7$}
	\label{fig:ca3inaction}
\end{figure}

In Figure~\ref{fig:ca3inaction}, if we treat the non-empty cells in each row as quaternary digits, CA3 gives a sequence of iterates: $$7, 11, 17, 13, 5, 1, 1.$$ As we will prove, because CA3 mimics the following modified Collatz map $T_1:\mathds{N}\to\mathds{N}$ defined by \begin{equation*}T_3(n)=\begin{cases} (3n+1)/2^{k(3n+1)}, & n\equiv1\mod{2} \\ n/2, & n\equiv0\mod{2},\end{cases}\end{equation*} where $2^{k(x)}$ is the largest power of four dividing $x$, by eliminating trailing zeroes.

\begin{thm}
For every $m$, there exists some $i$ so that after $i$ iterations of CA3, the states of cells not in the state of \texttt{empty} in row $k$ correspond to the bits of $T^k_3(n)$, where $n$ is the input, for all $0\le k\le m$.
\end{thm}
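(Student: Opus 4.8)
The plan is to follow the template of the proofs of Theorems 1 and 2: establish that the computation is causal so that the array stabilizes row by row, and then verify by a finite case analysis that the evolution laws are exhaustive, deterministic, and faithful to $T_3$. Because the neighborhood of $(i,j)$ consists only of the cell $(i,j-1)$ immediately to its right and the three cells $(i-1,j),(i-1,j-1),(i-1,j-2)$ above it, a correct value at any cell depends only on cells lying weakly above and weakly to the right. I would use this to argue, by induction on the row index, that once rows $0,\dots,k-1$ have settled to the bits of $T_3^{0}(n),\dots,T_3^{k-1}(n)$, the entries of row $k$ settle (sweeping from the right boundary leftward) to the bits of $T_3^{k}(n)$; since each iterate has finitely many bits, choosing $i$ large enough that rows $0$ through $m$ have all settled gives the theorem.

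The inductive step branches on the parity of the iterate in row $k-1$, which in base two is just its units bit. In the even case $T_3(x)=x/2$ is realized by deleting the single trailing zero and transcribing the remaining bits downward, so the right-boundary rules recognize a units digit of \texttt{0} and drop it while the interior rules reproduce the bit above. In the odd case I would use $3x+1=2x+x+1$, so that the bit of the new iterate at position $j$ equals $(d_j+d_{j-1}+c_j)\bmod 2$, where $d_j,d_{j-1}$ are the cells $(i-1,j),(i-1,j-1)$ and $c_j\in\{0,1\}$ is the carry into column $j$; carries never exceed $1$ because each column sum is at most $1+1+1=3$. The subsequent removal of all trailing zeros from $3x+1$ is carried out by the right-boundary rules that send the low-order patterns ending in \texttt{0} to \texttt{empty}.

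The heart of the argument, and the reason the neighborhood reaches the second cell to the left $(i-1,j-2)$, is a local carry-reconstruction claim: the carry $c_j$ is determined by $s_{i-1,j-1}(t)$, $s_{i-1,j-2}(t)$, and the already-computed bit $s_{i,j-1}(t)$. Writing $S=d_{j-1}+d_{j-2}+c_{j-1}=s_{i,j-1}(t)+2c_j$, the cases $d_{j-1}+d_{j-2}=0$ and $d_{j-1}+d_{j-2}=2$ force $c_j=0$ and $c_j=1$ respectively, while the single ambiguous case $d_{j-1}+d_{j-2}=1$ is resolved by the parity bit $s_{i,j-1}(t)$. This is exactly what allows CA3 to discard the explicit parity and carry attributes used by CA1 and CA2 and operate with only the states \texttt{0}, \texttt{1}, and \texttt{empty}.

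The step I expect to be the main obstacle is showing that the law set is simultaneously exhaustive and unambiguous in the absence of any parity attribute. A given local pattern of \texttt{0}s and \texttt{1}s could a priori occur both in an even (copy) computation and in an odd ($3x+1$) computation that demand different outputs, so I would need to prove the invariant that the relationship between each new-row cell and the cell directly above it encodes precisely the parity branch and the carry, making every admissible neighborhood unambiguous. Completing this requires checking each rule category against all neighborhoods it can meet, including the leftmost-digit rules that terminate an iterate and the right-boundary rules that delete trailing zeros, and confirming that Rule 2 correctly discards every unlisted pattern. Reconciling the drifting column alignment of the iterates with the stabilization argument across all of these boundary rules is where the bookkeeping will be heaviest.
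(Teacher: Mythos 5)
Your carry-reconstruction claim is exactly right and coincides with the paper's analysis of the interior (category 3a) rules: the carry into column $j$ is recovered from $s_{i-1,j-1}(t)$, $s_{i-1,j-2}(t)$, and the already-computed bit $s_{i,j-1}(t)$, with the ambiguous case $d_{j-1}+d_{j-2}=1$ resolved by $s_{i,j-1}(t)$; this is indeed why the neighborhood reaches $(i-1,j-2)$. But the framework you build around it contains a genuine error: you assume the inductive step ``branches on the parity of the iterate in row $k-1$,'' with an even branch realized by rules that drop a single trailing zero and copy the remaining bits downward. CA3 has no even branch and no copy rules at all. Trailing zeros are represented by \texttt{empty} cells from the start (the initialization writes only $d_L,\dots,d_N$, where $L$ is the lowest nonzero bit), and the right-boundary rules delete \emph{all} trailing zeros of $3n+1$ at once; consequently every row, read off its non-empty cells, is an odd number, and every row-to-row transition implements the single map $n\mapsto(3n+1)/2^{k(3n+1)}$ via $3x+1=(2x+1)+x$. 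Division by two is bypassed altogether --- this is the paper's central design point for CA3, not an implementation detail.

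This matters because the step you flag as the ``main obstacle'' --- proving an invariant that lets a bare $\{\texttt{0},\texttt{1},\texttt{empty}\}$ alphabet disambiguate neighborhoods arising in an even (copy) computation from those arising in an odd ($3x+1$) computation --- is not heavy bookkeeping but an impossibility: identical local patterns would demand different outputs, which is precisely why CA2 must carry explicit \texttt{-odd}/\texttt{-even} attributes on every digit. Moreover, the copy rule your even case needs, $s_{i,j}(t+1)=s_{i-1,j+1}(t)$ (halving shifts bits one column to the right), reads a cell outside the stated neighborhood $\{(i,j-1),(i-1,j),(i-1,j-1),(i-1,j-2)\}$; and in the trailing-zeros-as-empty representation a right shift changes nothing readable anyway, since absolute column position is not part of how a row is interpreted. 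So your proof as planned would stall at the exhaustiveness/unambiguity check. The repair is the paper's route: discard the even case entirely, and verify only the $3x+1$ rules --- your carry analysis for the interior rules, the right-boundary rules that iterate until the lowest nonzero bit $d_0'$ of $3n+1$ is located (deleting trailing zeros), and the leftmost-digit rules, where the paper rules out spurious leading digits by noting they would force the new iterate $y$ to exceed $T_3(x)$.
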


\begin{proof}Like in base four, parity in binary is given by the parity of the units digit, but unlike CA2, CA3 can mimic $T(n)=n/2$ simply by representing trailing zeroes in an iterate as cells in the \texttt{empty} state. What remains is for the CA to mimic $T(n)=3n+1$.

This function is equivalent to $T(n)=(2n+1)+n$ and thus, roughly speaking, to adding the iterate to a copy of itself shifted left one place and right-appended with a digit of 1.

We will denote the cell in the bottom-left of each evolution law in Figure~\ref{fig:basetwolaws}, that is, the cell that is evolving, by $(i,j)$, the current iterate by $\sum^{N'}_{k=0}2^kd'_k$, and the iterate in the preceding row by $\sum^{N}_{k=0}2^kd_k$.

We will prove that the rules in categories 3a to 3d are exhaustive in mimicking $T(n)=3n+1$. Again, multiplication by two involves carries of at most one.

If $s_{i,j-1}(t)=\texttt{empty}$ but $s_{i-1,j}(t)\neq\texttt{empty}$ as in the rules in category 3b, the CA is determining the rightmost digits of the iterate on row $i$. By design, $d_0$ must be 1, giving the first rule in category 2c. Here, $s_{i,j}(t+1)\equiv 3s_{i-1,j}(t)+1\pmod{2}$. There are then two choices-- \texttt{0}, \texttt{1}-- for $d_1$, represented by $s_{i-1,j}(t)$ in the next two rules in category 2c's left column. In these two cases, the state into which $(i,j)$ evolves is chosen so that $3(2d_1+d_0)+1=3(2s_{i-1,j}(t)+s_{i-1,j-1}(t))\equiv s_{i,j}(t+1)\pmod{2}$ if $s_{i,j}\neq\texttt{0}$ and, because CA3 treats trailing zeroes as empty cells, \texttt{empty} otherwise.

In the former case in which the position of $d'_0=s_{i,j}(t+1)$ has been determined, the CA then turns to evolution laws in category 3a to determine the inner digits of the iterate on row $i$. In the latter case in which $d'_0$ has not been determined, there are two possible values for $d_1$, namely \texttt{0}, \texttt{1}, giving the bottom two evolution laws in category 3b. By our interpretation of $3x+1$ as $(2x+1)+x$, the state into which cell $(i,j)$ evolves is chosen so that $$d'_0=s_{i,j}(t+1)\equiv s_{i-1,j-1}(t)+s_{i-1,j}(t)+\epsilon\pmod{4}$$ where $\epsilon = 1$ if $s_{i-1,j-1}(t)+s_{i-1,j-2}(t)>s_{i,j-1}$ and $\epsilon = 0$ otherwise (see discussion of category 3a rules below) , provided that $s_{i,j}(t+1)\neq\texttt{0}$; if not, $s_{i,j}(t+1)=\texttt{empty}$ because CA3 treats trailing zeroes as \texttt{empty}. 

Again, in the former case in which the position of $d'_0$ has been determined, the CA then turns to rules in category 3a to determine inner digits. In the latter case in which $d'_0$ has not been determined, there are two possible values for $d_2$, namely \texttt{0}, \texttt{1}, giving the top two evolution laws in category 3b's right column. The state into which $(i,j)$ evolves, either \texttt{empty} or \texttt{1}, is determined in the same way as above. One more, the CA turns to evolution laws in category 2d to determine the iterate's inner digits if $d'_0$ has been determined; otherwise, the above process is repeated until the position of $d'_0$ is found.

If $s_{i-1,j}(t)=\texttt{empty}$ as in the rules in categories 3c and 3d, the CA is determining the leftmost digits of the iterate on row $i$. 

In this case, if $s_{i-1,j-1}(t)\neq\texttt{empty}$, it represents $d_N$ and thus must be \texttt{1}. There are only two cases in which at least one of cells $(i-1,j-2)$ and $(i,j-1)$ is \texttt{empty}: i) the iterate on row $i-1$ is of the form $x=\sum^{M}_{k=0}4^k$ so that $T(x)=4^{M+1}$ and the iterate on row $i$ is thus 1, ii) the iterate on row $i-1$ is $x=1$ so that the iterate on row $i$ is also 1. These cases give rise to the first two rules of category 3d, respectively, where $s_{i,j}(t+1)$ is \texttt{empty} rather than \texttt{0} because CA3 deletes trailing zeroes. The last rule of 3d then accounts for the case in which the iterate on row $i$ is 1.

Beyond these two cases, if neither cell $(i-1,j-2)$ nor cell $(i,j-1)$ is in the \texttt{empty} state, there are two possible states for each, giving the first four rules in category 3c. Here, the state into which $(i,j)$ evolves is again chosen so that $$s_{i,j}(t+1)\equiv s_{i-1,j-1}(t)+s_{i-1,j}(t)+\epsilon\equiv s_{i-1,j-1}(t)+\epsilon\pmod{4}$$ where $\epsilon = 1$ if $s_{i-1,j-1}(t)+s_{i-1,j-2}(t)>s_{i,j-1}$ and $\epsilon = 0$ otherwise.  

On the other hand, if $s_{i-1,j-1}(t)=\texttt{empty}$, $s_{i-1,j-2}(t)$ represents $d_N$. There are two possible non-empty states for cell $(i,j-2)$-- \texttt{0}, \texttt{1}-- giving the last two rules in category 3c. In the former case, $(i,j)$ must represent $d'_{N'}$; otherwise, if we denote this iterate to be $y$ and the preceding iterate to be $x$, $y>T_3(x)$, a contradiction. Therefore $s_{i,j}(t+1)=\texttt{1}$. In the latter case, $(i,j-1)$ must represent  $d'_{N'}$ for the same reason, so $s_{i,j}(t+1)=\texttt{0}$.

Finally, if none of the cells in the neighborhood of $(i,j)$ are in state \texttt{empty} as in the rules in category 3a, the CA is determining the inner digits of the iterate. Each of the cells in the neighborhood can be in any of two possible states, giving the sixteen rules in this category. Because $3x+1=(2x+1)-x$, $s_{i,j-1}(t)\equiv s_{i-1,j-2}(t)+s_{i-1,j-1}(t)+\epsilon\pmod{2}$, where $\epsilon\in\{0,1\}$. $$s_{i,j-1}(t)<s_{i-1,j-2}(t)+s_{i-1,j-1}(t)\le s_{i-1,j-2}(t)+s_{i-1,j-1}(t)+\epsilon$$ implies the addition $s_{i-1,j-2}(t)+s_{i-1,j-1}(t)$ was performed with a carry over to $s_{i-1,j}(t)$, so $s_{i,j}(t+1)\equiv s_{i-1,j-1}(t)+s_{i-1,j}(t)+1\pmod{2}$. On the other hand, $s_{i,j-1}(t)\ge s_{i-1,j-2}(t)+s_{i-1,j-1}(t)$ implies there was no carry, and $s_{i,j}(t+1)\equiv s_{i-1,j-1}(t)+s_{i-1,j}(t)\pmod{2}$.\end{proof}

\section{CONCLUSION}

Call the \emph{$n$-efficiency} of any of the above CAs the number of iterates that have been computed by the CA upon reaching 1 given input $n$, divided by the total stopping time of that input. The average $n$-efficiency over all $n$ such that $1\le n\le 2^{14}$ was determined to be roughly 69.4\%, 63.7\%, and 32.2\% for CA1, CA2, and CA3, respectively.

In addition, observe that it is not necessary to initialize any of these CAs with only one input. In fact, these CAs are able to compute the trajectories of an arbitrary number of inputs at a time. As an example, in the case of CA3, let the set of inputs be $\{n_j\}_{1\le j\le M}$, where input $n_j$ has binary representation $\sum^{N_j}_{i=0}2^id^j_i$ such that the smallest $i$ for which $d_i\neq 0$ is some $L_j$. The CA is initialized as follows: pick an arbitrary $k_0\in\mathds{Z}$. For all $j$ such that $1<j\le M$, let $k_j=k_{j-1}+N_{j-1}+\epsilon_j$, where $\epsilon_j$ is chosen so that the trajectories of $\sum^{N_j}_{i=0}2^id^j_i$ and $\sum^{N_{j-1}}_{i=0}2^id^{j-1}_i$ do not collide on the grid. Then for each $j$ such that $1\le j\le M$, let$s_{0,k_j+i}(0)=d^j_i$ for $0\le i\le N_j$. Let $s_{x,y}(0)=\texttt{empty}$ for all other pairs $(x,y)$.

\begin{figure}[h]
  \centering
      \includegraphics[width=0.65\textwidth]{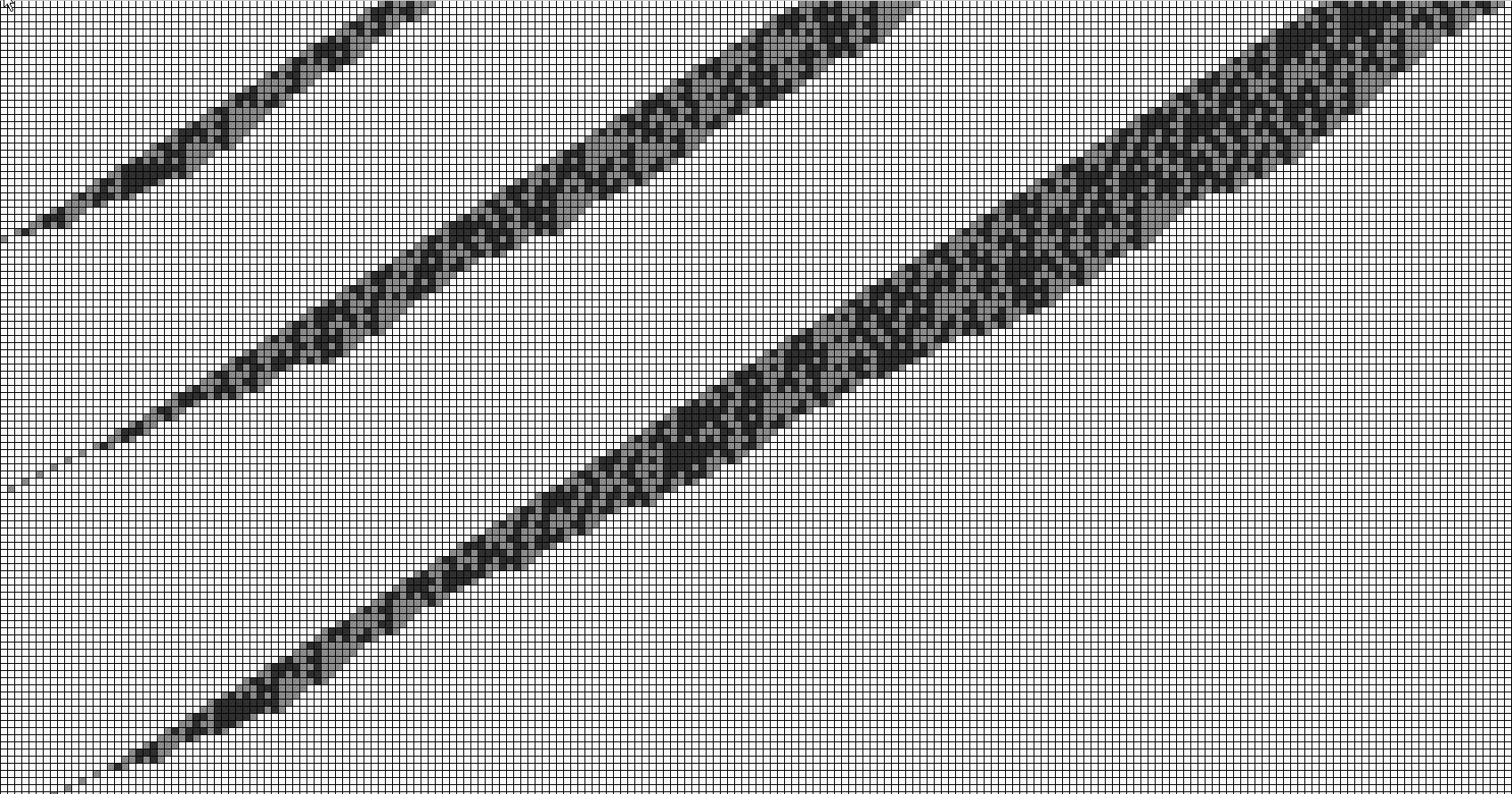}
  \caption{Parallel computing for initial iterates $n=$183, 120767, 53132499}
\end{figure}

Note however, that the difficulty in initializing a CA to compute multiple trajectories in parallel this way lies in the unpredictability of the behavior of $\epsilon_j$: it is not certain how large the space between two inputs must be for their trajectories not to collide.

As a workaround, if we place each iterate whose trajectory we want to compute on its own grid and, roughly speaking, stack those grids on top of one another to form an arbitrarily high three-dimensional grid of cubical cells, then this CA can verify multiple inputs in parallel.

\begin{figure}[h]
  \centering
      \includegraphics[height=1.5in]{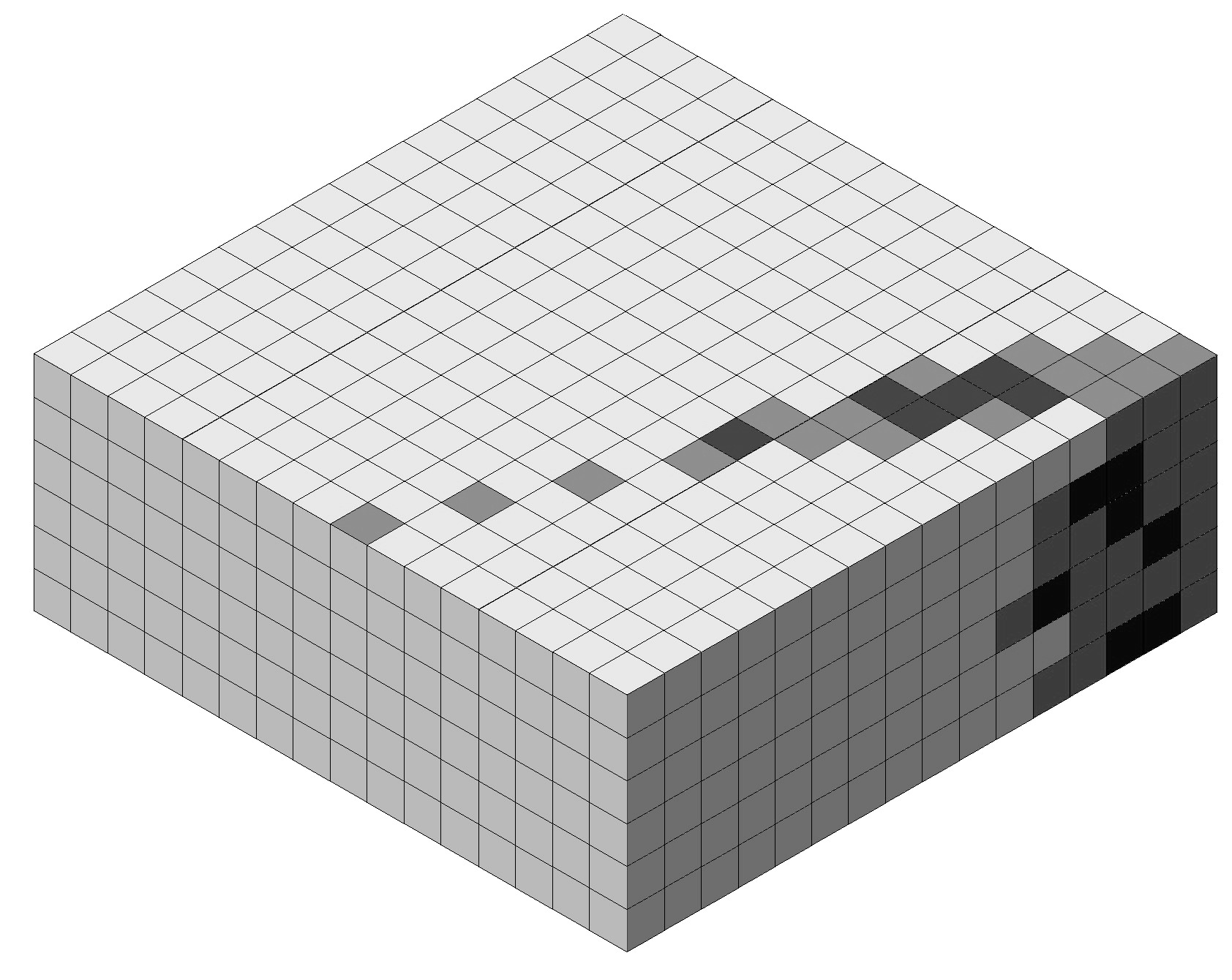}
  \caption{Parallel computing in three dimensions}
\end{figure}

In conclusion, we have found three CAs that mimic the behavior of the Collatz map in bases two, three, and four, and streamline the process of verification of the Collatz Conjecture by bypassing calculation of certain parts of the trajectory.  Beyond speeding the progress towards potentially finding a counterexample, this work affords insights into the distribution of the $\pmod{2^k}$ residues of iterates, which may justify the existing heuristic probabilistic argument that trajectories tend to decrease [7].

Topics for further study include applying the parallel computing model we propose to similar iterative computations, exploring whether it is possible to create evolution laws that depend entirely on digits from the previous iterate, and exploring other potentially viable radices for new CAs.  With regards to the latter, it is known that the distribution of $\left(3/2\right)^k\pmod{1}$ is closely related to the ``sorting properties'' of the Collatz map, that is, whether the function distributes odd iterates equally among the residues $\pmod{2^k}$ [3].  If no reasonable CA exists in base 3/2, it could still be worth exploring possible CAs for computing $\lfloor{\left(3/2\right)^k}\rfloor$. Implementations in Java of CA2 and CA3 can be found at \url{www.sitanchen.com/collatz4.html} and \url{www.sitanchen.com/collatz.html}, respectively.


\begin{thebibliography}{21}
\bibitem{TOES} Oliveira E Silva, T. (2010). Empirical Verification of the $3x+1$ and Related Conjectures. In ``\emph{The Ultimate Challenge: The $3x+1$ Problem},'' (edited by Jeffrey C. Lagarias),189-207.
\bibitem{SKJS} Kurtz, S., Simon, J. (2007) The Undecidability of the Generalized Collatz Problem. \emph{Theory and Applications of Models of Computation}, 4484, 542-53. 
\bibitem{JCL2} Lagarias, Jeffrey C. (1985). The 3x+1 Problem and Its Generalizations. \emph{American Mathematical Monthly}, 92, 3-23. 
\bibitem{LDM}De Mol, Liesbeth. (2008). Tag Systems and Collatz-like Functions. \emph{Theoretical Computer Science}, 390.1, 92-101. 
\bibitem{PM} Michel, Pascal. (2004). Small Turing Machines and Generalized Busy Beaver Competition. \emph{Theoretical Computer Science} 326.1-3, 45-56.
\bibitem{MB} Bruschi, Mario. (2005). Two Cellular Automata for the $3x+1$ Map. \emph{arXiv}. Retrieved 29 Nov. 2009, from \url{http://arxiv.org/PS_cache/nlin/pdf/0502/0502061v1.pdf}.
\end{thebibliography}
\end{document}